\tikzset{vtx/.style={circle, inner sep=0pt, minimum size=0.22cm,fill=black}}
\DeclareMathAlphabet{\mathcal}{OMS}{cmsy}{m}{n}
\newtheorem{thm}{Theorem}
\newtheorem{lem}[thm]{Lemma}
\newtheorem*{thm*}{Theorem}
\newtheorem*{cor*}{Corollary}
\newtheorem*{lem*}{Lemma}
\newtheorem*{prop*}{Proposition}
\newtheorem*{ex*}{Exercise} 
\theoremstyle{definition}
\newtheorem*{defn*}{Definition}
\newtheorem*{prob*}{Problem}
\theoremstyle{definition}
\theoremstyle{definition}
\newtheorem*{example*}{Example}
\theoremstyle{remark}
\newtheorem*{conj*}{Conjecture}
\newcommand{\mbf}[1]{\mathbf{#1}}
\renewcommand{\today}{\number \day \space \ifcase \month \or January\or%
  February\or March\or April\or May\or June\or July\or August\or%
  September\or October\or November\or December\fi \space \number \year}
\DeclareMathOperator{\inc}{inc}
\newcommand{\ileft}[2][{}]{\ensuremath{\ell_{#1}(#2)}}
\newcommand{\iright}[2][{}]{\ensuremath{\mathit{r}_{#1}(#2)}}
\newcommand{\set}[1]{\ensuremath{\left\{ #1 \right\}}}
\newcommand{\twoplustwo}{\ensuremath{\mbf{2}+\mbf{2}}}
\newcommand{\oneplusthree}{\ensuremath{\mbf{1}+\mbf{3}}}
\newcommand{\paren}[1]{\ensuremath{\left( #1 \right)}}
\renewcommand{\set}[1]{\ensuremath{\left\{ #1 \right\}}}
\renewcommand{\ileft}[2][{}]{\ensuremath{{\ell}_{#1}(#2)}}
\renewcommand{\iright}[2][{}]{\ensuremath{{r}_{#1}(#2)}}
\begin{document}
\title{Dimension of Restricted Classes of Interval Orders}
\author{Mitchel T.\ Keller}
\address{Morningside University\\1501 Morningside Avenue\\Sioux City, IA 51106}
\email{kellerm@morningside.edu}
\author{Ann N.\ Trenk}

\address{Department of Mathematics\\Wellesley College\\Wellesley, MA}
\email{atrenk@wellesley.edu}
\author{Stephen J.\ Young}
\address{Pacific Northwest National Laboratory\\Richland, WA 99352}
\email{stephen.young@pnnl.gov}
\date{\today}
\thanks{\textit{PNNL Information Release:} PNNL-SA-152654}
\thanks{
This work was supported by a grant from the Simons Foundation (\#426725, Ann Trenk). }
\keywords{interval order, dimension}
\subjclass[2010]{06A07}
\maketitle
\begin{abstract}
  Rabinovitch showed in 1978 that the interval orders having a
  representation consisting of only closed unit intervals have order
  dimension at most \(3\). This article shows that the same dimension
  bound applies to two other classes of posets: those having a
  representation consisting of unit intervals (but with a mixture of
  open and closed intervals allowed) and those having a representation
  consisting of closed intervals with lengths in \(\set{0,1}\).
\end{abstract}

\section{Introduction and Background}

Recent years have seen a number of efforts to characterize the classes of interval orders between the unit interval orders (or semiorders) in which each interval has the same length and the full class of interval orders in which intervals of any nonnegative length are allowed. See, for instance, \cite{boyadzhiyskaSimpleProofCharacterizing2018,boyadzhiyskaIntervalOrdersTwo2019,isaakBoundedDiscreteRepresentations1993,joosCharacterizationMixedUnit2015,rautenbachUnitIntervalGraphs2013,shuchatUnitIntervalOrders2016,shuchatUnitMixedInterval2014}. To date, this work has focused on forbidden subposet characterizations and recognition algorithms. Similar questions involving interval \emph{graphs} have also been studied. See, for example, \cite{cerioliCountingIntervalLengths2011,fishburnClassesIntervalGraphs1985,koblerIntervalGraphRepresentation2015,peerIntervalGraphsSide1995,peerRealizingIntervalGraphs1997}. This paper begins to address how these classes interact with the order dimension of these posets. For example, Rabinovitch showed in 1978 in \cite{rabinovitch:semiorder-dim} that the dimension of a unit interval order is at most three. On the other hand, a series of papers have shown that for every positive integer \(d\), there exists an interval order of dimension at least \(d\) and developed a thorough understanding of the rate of growth of dimension in terms of typical poset parameters. (See section~\ref{sec:conclusion} for references.) However, the growth rate of the maximum dimension of an interval order having a representation using at most \(r\) distinct interval lengths has not been carefully studied. For the classes of interval orders studied in this paper, we show that the dimension is at most 3, which raises the compelling question of finding the ``smallest'' (in any of several reasonable senses) interval order of dimension \(4\).

\subsection{Special Classes of Interval Orders}

Following Trotter's survey article \cite{trotter:intords}, we consider posets to be reflexive. An \emph{interval order} is a poset \(P\) for which each element \(x\)
of the ground set of \(P\) can be assigned an interval
\([\ileft{x},\iright{x}]\) so that for distinct \(x\) and \(y\), \(x < y\) in \(P\) if and only if
\(\iright{x} < \ileft{y}\). That is, \(x < y\) in \(P\) if and only if
the interval assigned to  \(x\) lies completely to the left of the interval assigned to
\(y\).  It follows that  two elements are incomparable in \(P\) if and only if their
intervals intersect. Such an assignment of a collection of intervals is
called an \emph{interval representation} or just \emph{representation}
of \(P\). Interval orders were shown by Fishburn in
\cite{fishburn:intords-char} to be characterized by being the class of
posets that exclude \twoplustwo, the disjoint union of two chains on
two elements. General background on interval orders can be found in
the monograph of Golumbic and Trenk \cite{golumbicToleranceGraphs2004} or Trotter's survey article \cite{trotter:intords}.

A \emph{semiorder} or \emph{unit interval order} is an interval order
having a representation in which all intervals have the same
(typically unit) length. Scott and Suppes showed in
\cite{scott:semiorders} that unit interval orders can be characterized as the
posets that exclude \twoplustwo{} and \oneplusthree, where \oneplusthree{} is  the disjoint
union of an isolated point and a chain on three elements. While
interval representations are typically assumed to consist of closed,
bounded intervals, for finite interval orders, it is equivalent to
restrict to open, bounded intervals. Working from the perspective of graph theory, 
 Rautenbach and
Szwarcfiter showed in \cite{rautenbachUnitIntervalGraphs2013} that
allowing both open and closed intervals (all bounded) in a
representation does not expand the class of posets beyond the interval
orders. In addition, they give a forbidden graph characterization of the class of interval graphs having a representation
in which all intervals have unit length with both open and closed intervals allowed. 
Indeed, the following result follows immediately from the construction provided in part (i) of the proof of \cite[Proposition 1]{rautenbachUnitIntervalGraphs2013}.
\begin{lem}\label{lem:rep}
    Let $P$ be a poset having a representation consisting of any combination of open, closed, or half-open intervals, then $P$ is an interval order that can be represented by a collection of closed intervals.
\end{lem}

In \cite{shuchatUnitIntervalOrders2016}, Shuchat et al.\ considered the analogous class of interval orders having 
a representation in which all intervals have unit length with both
open and closed intervals allowed.   They called these posets
\emph{unit OC interval orders} and gave both a forbidden subposet characterization and a polynomial time
recognition algorithm for the class. It is straightforward to see that
\oneplusthree{} \emph{is} a unit OC interval order, so the class of unit OC interval
orders is strictly larger than the class of unit interval orders.   In
\cite{boyadzhiyskaIntervalOrdersTwo2019}, Boyadzhiyska et al.\ studied
the interval orders having a representation in which each (closed)
interval has length \(0\) or \(1\). The authors use digraph methods
to provide a forbidden subposet characterization of this class of
posets. These two classes of posets are our objects of study in this
paper. Because we are able to prove our results using only the 
representations of these posets, we do not reproduce the forbidden
subposet characterizations from \cite{boyadzhiyskaIntervalOrdersTwo2019} and  \cite{shuchatUnitIntervalOrders2016} here.

In \cite{bogart:canonical-intords}, Bogart et al.\ first studied the
dimension of interval orders and showed that there are interval orders
of arbitrarily large dimension. They did so by using the canonical
interval order on \(n\) endpoints, which consists of all intervals
having endpoints in the set \(\set{1,2,\dots,n}\). Subsequently,
Rabinovitch showed in \cite{rabinovitch:semiorder-dim} that if \(P\)
is a unit interval order, then its dimension is at most \(3\). The independent
characterization of the \(3\)-irreducible posets (with respect to
dimension) by Kelly in \cite{kelly:3-dim-irr} and Trotter and Moore in
\cite{trotter:3-dim-irr} subsequently made it easy to show that the
dimension of a unit interval order that is not a total order is \(3\) if and
only if it contains one of three posets on seven points shown in Figure~\ref{fig:semi-forb}. 

\begin{figure}[h]
  \centering
  \begin{tikzpicture}
    \draw (2,1) --(1,0) -- (1,1) -- (1,2);
    \draw (0,0) -- (0,1) -- (1,0);
    \draw (2,0) -- (2,1) -- (0,0) -- (1,2) -- (2,0);
    \draw [fill] (0,0) circle [radius=0.1];
    \node[vtx, label={[left] {$a_1$}}] at (0,0) {};
    \node[vtx, label={[left] {$b_1$}}] at (0,1) {};
    \node[vtx, label={[below, label distance = -0.15cm] {$a_2$}}] at (1,0) {};
    \node[vtx, label={[below left] {$b_2$}}] at (1,1) {};
    \node[vtx, label={[above] {$c$}}] at (1,2) {};
    \node[vtx, label={[right] {$a_3$}}] at (2,0) {};
    \node[vtx, label={[right] {$b_3$}}] at (2,1) {};
    \node at (1,-1) {$\mbf{FX}_2$};
  \end{tikzpicture}\hspace*{0.1\linewidth}
  \begin{tikzpicture}
    \draw (2,0.5) -- (1,0) -- (1,1) -- (1,2) -- (2,1.5);
    \draw (1,0) -- (0,1.5) -- (0,0.5) -- (1,2);
    \node [vtx, label={[left] {$a_1$}}] at (0,0.5) {};
    \node [vtx, label={[left] {$a_2$}}] at (0,1.5) {};
    \node [vtx, label={[below right, label distance=-0.1cm] {$b_1$}}] at (1,0) {};
    \node [vtx, label={[right] {$b_2$}}] at (1,1) {};
    \node [vtx, label={[right] {$b_3$}}] at (1,2) {};
    \node [vtx, label={[right] {$c$}}] at (2,0.5) {};
    \node [vtx, label={[right] {$d$}}] at (2,1.5) {};
    \node at (1,-1) {$\mbf{H}_0$};
  \end{tikzpicture}\hspace*{0.1\linewidth}
  \begin{tikzpicture}
    \draw (0,0) -- (0,1) -- (0,2) -- (1,0.5) -- (1,1.5) -- (1,2.5) -- (2,1.5) -- (0,0) -- (1,1.5);
    \draw (0,1) -- (1,2.5);
    \node [vtx, label={[left] {$a_1$}}] at (0,0) {};
    \node [vtx, label={[left] {$a_2$}}] at (0,1) {};
    \node [vtx, label={[left] {$a_3$}}] at (0,2) {};
    \node [vtx, label={[below right, label distance=-0.1cm] {$b_1$}}] at (1,0.5) {};
    \node [vtx, label={[right] {$b_2$}}] at (1,1.5) {};
    \node [vtx, label={[right] {$b_3$}}] at (1,2.5) {};
    \node [vtx, label={[right] {$c$}}] at (2,1.5) {};
    \node at (1,-0.75) {$\mbf{G}_0$};    
  \end{tikzpicture}
  \caption{The three subposets that can force a unit interval order to have dimension $3$}
  \label{fig:semi-forb}
\end{figure}
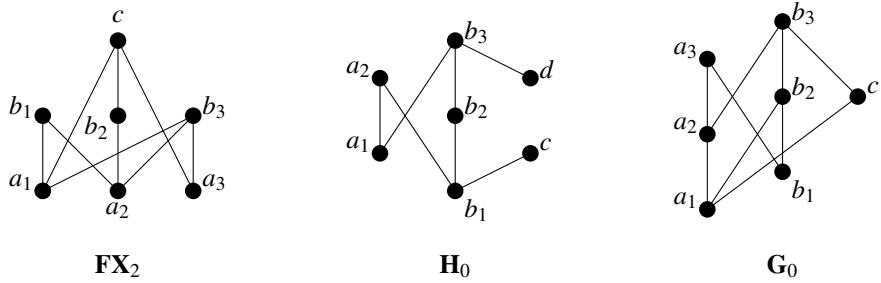

In this paper, we show that if \(P\) is a unit OC interval order or has a
representation consisting of intervals of lengths \(0\) and \(1\),
then the dimension of \(P\) is at most \(3\). While Rabinovitch's
argument for unit interval orders appears to rely on the lack of
\oneplusthree, we are able to give our proof by using the representation for
unit OC interval orders. The proof for interval orders that are
\(\set{0,1}\)-representable is more intricate and relies on other
techniques in dimension theory. The next subsection provides an
overview of the background in dimension theory that is required to
read the remainder of the paper. We conclude with some open questions
regarding the dimension of interval orders.


\subsection{Definitions}

The \emph{down set} of \(x\) in a poset \(P\) is \(D(x) = \set{z\in P\colon z <
  x\text{ in } P }\), and we write \(D[x] = D(x)\cup\set{x}\). The \emph{up
sets} \(U(x)\) and \(U[x]\) are defined dually.  We say that a poset
\(P\) has \emph{no duplicated holdings} provided that there do not
exist distinct points \(x,y\in P\) such that \(D(x) = D(y)\) and
\(U(x) = U(y)\). Some papers in this area refer to such posets as
\emph{twin-free}. 

If $P$ is a poset and $A,B\subseteq P$, we write $A < B$ to mean that
for all $a\in A$ and all $b\in B$, $a < b$. (And similarly for
$A > B$.) If $P$ is an interval order, this means that in any interval
representation of $P$, every interval assigned to a point of $A$ lies completely to the
left of every interval assigned to a point of $B$.

Given a poset \(P\), let \(\inc(P) = \set{(x,y)\in P\times P\colon x\text{ is incomparable to }y\text{ in }P}\). We call an ordered pair \((x,y)\in\inc(P)\) an \emph{incomparable pair}.  A   \emph{linear extension}  \(L\) of
\(P\)   is a total order on the ground set of \(P\)
such that if \(x < y\) in \(P\), then \(x < y\) in \(L\). A
\emph{realizer} of \(P\) is a set \(\mathcal{R}\) of linear extensions
of \(P\) such that \(x < y\) in \(P\) if and only if \(x < y\) in
\(L\) for all \(L\in\mathcal{R}\). Equivalently, \(\mathcal{R}\) is a
realizer for \(P\) if and only if for each incomparable pair \((x,y)\)
of \(P\), there exist \(L,L'\in\mathcal{R}\) such that \(x < y\) in
\(L\) and \(y<x\) in \(L'\). In this case, we say that \(L\) \emph{reverses} the incomparable pair \((y,x)\) and \(L'\)
reverses the incomparable pair \((x,y)\). Since \((x,y)\) being
an incomparable pair means that \((y,x)\) is also an incomparable
pair, it is sufficient to say that \(\mathcal{R}\) is a realizer
provided that each incomparable pair is reversed by a linear extension
in \(\mathcal{R}\). 

The \emph{dimension} of a poset \(P\) is the least positive integer
\(t\) such that there exists a realizer of \(P\) having cardinality
\(t\). Note that for posets of dimension at least \(2\), it is always
possible to assume that \(P\) has no duplicated holdings without
changing the dimension, since elements with the same up set and same
down set can be placed consecutively in  one linear extension of a realizer and consecutively in the reverse order in another to ensure that all incomparable pairs are
reversed.


A \emph{strict alternating cycle} of length \(k\) in a poset \(P\) is a sequence \(\set{(x_i,y_i)\colon
  1\leq i\leq k}\) such that
  \(x_i\leq y_{i+1}\) cyclically for \(i=1,2,\dots,k\) and
  \(x_i\) and \(y_j\) are incomparable when \(j\neq i+1\) (cyclically).
  In \cite{Tro-Moo-77}, Trotter and Moore showed that there is a
  linear extension reversing all incomparable pairs in a set \(S\subseteq P\times P\) if
  and only if \(S\) does not contain a strict alternating
  cycle. Figure~\ref{fig:strict-alt} shows a strict alternating cycle
  of length \(5\). The only comparabilities amongst the \(10\) points
  are those illustrated, but note that since our posets are reflexive,
  it is possible for the comparabilities to be equality. For instance,
  \(x_3=y_4\) is a possibility. 

  \begin{figure}[h]
    \centering
    \begin{tikzpicture}
      \foreach \x in {1,2,3,4,5} {\node[vtx, label={[below, label
          distance = -0.15cm] {$x_\x$}}] at (\x,0) {}; \node[vtx,
        label={[above] {$y_\x$}}] at (\x,2) {}; }

      \foreach \i in {1,2,...,4} { \pgfmathsetmacro{\t}{1+\i} \draw
        (\i,0) -- (\t,2); }
      \draw (5,0) -- (1,2);
    \end{tikzpicture}
    
    \caption{A strict alternating cycle}
    \label{fig:strict-alt}
  \end{figure}
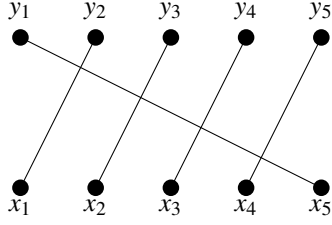
  
When \(P\) is an interval order, a strict alternating cycle takes a
particularly restrictive form, since in an interval order, a strict alternating cycle \(S\) of
length at least \(2\) may only contain one strict comparability or
else \(S\) witnesses that \(P\) contains \twoplustwo. We note for its
usefulness later that a strict alternating cycle of length \(2\) in an
interval order that contains a strict comparability must therefore
be \(\set{(x_1,y_1),(x_2,x_1)}\) with and \(x_2 < y_1\) and \(x_1\) incomparable to \(y_1\)
and \(x_2\). 

We now establish a useful consequence of the preceding observation. This originally appears in Rabinovitch's thesis \cite{rabinovitchDimensionTheorySemiordersIntervalOrders1973}, but can also be found in Trotter's monograph \cite[p. 196]{trotter:dimbook}. 

\begin{lem}\label{lem:rev-sets}
    If \(P\) is an interval order and \(A\) and \(B\) are disjoint subsets of \(P\), then there exists a linear extension \(L\) of \(P\) with \(a > b\) in \(L\) for all \(a\in A\) and \(b\in B\) for which \(a\) and \(b\) are incomparable in \(P\).
\end{lem}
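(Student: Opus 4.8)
The plan is to reduce the statement to the Trotter--Moore criterion on strict alternating cycles recalled just above. I would set
\[
S = \set{(a,b)\colon a\in A,\ b\in B,\ (a,b)\in\inc(Q)},
\]
the collection of incomparable pairs that the sought-after linear extension must put in the order $a>b$. Since $L$ reverses a pair $(a,b)$ exactly when $b<a$ in $L$, a linear extension $L$ of $Q$ satisfies the conclusion of the lemma precisely when $L$ reverses every pair in $S$. By the theorem of Trotter and Moore, such an $L$ exists if and only if $S$ contains no strict alternating cycle, so the whole task becomes showing that $S$ has no strict alternating cycle.

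The heart of the argument is to derive a contradiction from an assumed strict alternating cycle $\set{(u_i,v_i)\colon 1\le i\le k}$ contained in $S$, so that $u_i\in A$ and $v_i\in B$ for every $i$. First I would rule out $k=1$: the cyclic condition would read $u_1\le v_2 = v_1$, contradicting the incomparability of $u_1$ and $v_1$ demanded by $(u_1,v_1)\in\inc(Q)$; hence $k\ge 2$. Now each defining comparability of the cycle has the form $u_i\le v_{i+1}$ (indices cyclic), relating an element $u_i\in A$ to an element $v_{i+1}\in B$. Because $A$ and $B$ are disjoint we can never have $u_i=v_{i+1}$, so every one of the $k$ comparabilities is \emph{strict}. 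On the other hand, since $Q$ is an interval order and $k\ge 2$, the observation preceding the lemma shows that a strict alternating cycle in $Q$ may contain at most one strict comparability, lest it witness a $\twoplustwo$ in $Q$. With $k\ge 2$ strict comparabilities forced and at most one permitted, we have our contradiction. Thus $S$ contains no strict alternating cycle, and the Trotter--Moore theorem yields a linear extension $L$ of $Q$ reversing every pair of $S$, which is exactly the desired $L$.

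I expect the only genuine subtlety to be bookkeeping rather than depth: one must fix the orientation of $S$ so that reversing its pairs produces $a>b$ (not $b>a$), and one must see that it is precisely the disjointness hypothesis that upgrades every cyclic comparability $u_i\le v_{i+1}$ from a possible equality to a strict relation. Once those two points are pinned down, the interval-order restriction on strict alternating cycles does all the remaining work, and in particular no separate analysis of the length-$2$ cycle is needed beyond the quick elimination of $k=1$.
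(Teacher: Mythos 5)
Your proof is correct and follows essentially the same route as the paper's: both define \(S=(A\times B)\cap\inc(Q)\), invoke the Trotter--Moore criterion, and use disjointness of \(A\) and \(B\) together with the fact that a strict alternating cycle in an interval order admits at most one strict comparability (equivalently, the paper's phrasing that some element must occur as both a first and a second coordinate) to rule out any strict alternating cycle in \(S\). The only cosmetic difference is that the paper leaves the exclusion of length-\(1\) cycles implicit, whereas you dispatch it explicitly.
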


\begin{proof}
    Let \(S = (A\times B)\cap\inc(Q)\). Because \(A\) and \(B\) are disjoint and the only strict alternating cycles \(C\) in the interval order \(P\) have an element that appears as both the first coordinate and a second coordinate of distinct pairs in \(C\), we know that \(S\) contains no strict alternating cycles. Thus, there is a linear extension \(L\) of \(P\) reversing all the pairs in \(S\). Such a linear extension has the desired property.
\end{proof}

\section{Dimension of Unit OC Interval Orders and Length
  \(\{0,1\}\)-Representable Interval Orders}
  
  Our first main result concerns the dimension of unit OC interval orders.  Since unit OC interval orders are not necessarily unit interval orders, in the proof we build a related unit interval order $Q$. We do this in order to take advantage of the ideas behind Rabinovitch's proof that unit interval orders have dimension at most \(3\).  

\begin{thm}\label{thm:unit-OC-dim}
  If $P$ is a unit OC interval order, then $\dim(P)\leq 3$.
\end{thm}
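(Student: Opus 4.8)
The plan is to reduce to Rabinovitch's theorem for ordinary unit interval orders by manufacturing a genuine unit interval order \(Q\) on the ground set of \(P\) whose comparabilities differ from those of \(P\) in a tightly controlled way, and then to convert a \(3\)-realizer of \(Q\) into a \(3\)-realizer of \(P\).

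First I would fix a unit OC representation of \(P\) and record exactly where the open/closed data matters. Writing \(\ell_x\) for the left endpoint of the (length-one) interval assigned to \(x\), one checks that \(x<y\) in \(P\) forces \(\ell_y\ge \ell_x+1\), that \(\ell_y>\ell_x+1\) always yields \(x<y\), and that the only delicate case is the \emph{boundary case} \(\ell_y=\ell_x+1\): here \(x<y\) precisely when at least one of the two intervals is open, while \(x\) and \(y\) are incomparable when both intervals are closed. Thus all of the behavior that makes \(P\) an interval order but possibly not a semiorder is concentrated in these boundary pairs.

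Next I would build \(Q\) by a generic infinitesimal perturbation of the left endpoints: set \(m_x=\ell_x+\varepsilon\,g(x)\), where \(g\) is an injective ranking of the points by increasing \(\ell_x\) and \(\varepsilon>0\) is sufficiently small (smaller than every gap \(\abs{(\ell_y-\ell_x)-1}\) arising from the finitely many strictly comparable or strictly overlapping pairs), and assign to each point the closed unit interval \([m_x,m_x+1]\). For such \(\varepsilon\) this leaves every strict comparability and every strict overlap of \(P\) unchanged, while it separates every boundary pair, so \(Q\) is a unit interval order with \(P\subseteq Q\); the pairs on which \(P\) and \(Q\) differ are exactly the \emph{ghost pairs}, namely the both-closed boundary pairs, which are incomparable in \(P\) but comparable in \(Q\). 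By Rabinovitch's theorem \(\dim Q\le 3\), so I fix a realizer \(\set{L_1,L_2,L_3}\) of \(Q\). Since \(P\subseteq Q\), each \(L_i\) is automatically a linear extension of \(P\), and together they reverse every incomparable pair of \(P\) with the sole exception of the ghost pairs, each of which has \(x\) before \(y\) (where \(\ell_y=\ell_x+1\)) in all three extensions.

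It remains to reverse the ghost pairs as well, and this is the heart of the matter. Each ghost request ``place \(y\) before \(x\)'' runs from a larger to a strictly smaller left endpoint, so the ghost requests form a DAG and are acyclic among themselves; the real danger is that, after folding them into one extension, they combine with a strict comparability of \(P\) to produce a strict alternating cycle. Here I would invoke the rigidity of strict alternating cycles in interval orders recalled above — any such cycle contains at most one strict comparability and hence reduces to the length-two form \(\set{(x_1,y_1),(x_2,x_1)}\) with \(x_2<y_1\) — to verify that the ghost pairs, together with the pairs already handled by (say) \(L_3\), contain no strict alternating cycle of \(P\). The Trotter--Moore criterion (equivalently, the batch-reversal provided by Lemma~\ref{lem:rev-sets}) then furnishes a single linear extension \(L_3'\) of \(P\) reversing all of them simultaneously, and \(\set{L_1,L_2,L_3'}\) is the desired realizer. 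The main obstacle is precisely this last verification: controlling the interaction between the newly reversed both-closed boundary pairs and the short (``length-one'') strict comparabilities of \(P\). It is here, rather than in the invocation of Rabinovitch's bound as a black box, that the special structure of a unit OC representation must be used.
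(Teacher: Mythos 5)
Your construction of \(Q\) is sound---indeed the perturbed order you build coincides exactly with the order the paper obtains by making every interval open, and your identification of the ``ghost pairs'' (both-closed boundary pairs) as the only difference between \(P\) and \(Q\) is correct. The fatal problem is the final step: you propose to reverse \emph{all} ghost pairs inside a single modified extension \(L_3'\), and you defer the verification that this is possible, calling it ``the main obstacle.'' That verification cannot be carried out, because the claim is false: the ghost pairs by themselves can already contain a strict alternating cycle of \(P\). Take \(P=\oneplusthree\), the smallest unit OC interval order that is not a unit interval order, with representation \(x_1=[0,1]\), \(x_2=(1,2)\), \(x_3=[2,3]\), \(y=[1,2]\), so that \(x_1<x_2<x_3\) and \(y\) is incomparable to all three. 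The ghost pairs are \((x_1,y)\) and \((y,x_3)\), and the set \(\set{(x_1,y),(y,x_3)}\) is a strict alternating cycle of length \(2\): we have \(x_1\le x_3\), \(y\le y\), with \(x_1\) incomparable to \(y\) and \(y\) incomparable to \(x_3\). Equivalently, reversing both pairs in one linear extension would force \(x_1>y>x_3\), contradicting \(x_1<x_3\) in \(P\). Your remark that the ghost requests ``form a DAG and are acyclic among themselves'' looks only at the requests and ignores the comparabilities of \(P\); it is precisely the interaction with the comparability \(x_1<x_3\) that creates the cycle, and this is not a pathological case but occurs already in the simplest example. Since every extension in a realizer of \(Q\) orders each ghost pair the same way, the ghost reversals must be distributed over at least \emph{two} of the three extensions, so no modification of a single \(L_3\) can succeed.

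This is exactly what the paper's proof arranges, and it is why the paper does not invoke Rabinovitch's theorem as a black box. It builds the same \(Q\), partitions \(Q\) into antichains \(A_1,\dots,A_t\) by repeatedly removing minimal elements, shows that every incomparability of \(P\) lies within one antichain or between consecutive antichains, and then constructs three extensions of \(P\) directly: \(L_1\) reverses (via Lemma~\ref{lem:rev-sets} applied to \(P\)) all incomparable pairs pointing from even-indexed into odd-indexed antichains, \(L_2\) does the same for the opposite parity, and \(L_3\) lists \(A_1<A_2<\cdots<A_t\) with each antichain in the order dual to \(L_1\). In particular, the ghost-pair reversals are split between \(L_1\) and \(L_2\) according to the parity of the antichains they straddle---the distribution your single fold-in cannot provide. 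Starting from an \emph{arbitrary} realizer of \(Q\), there is no evident way to recover this parity structure, so repairing your argument essentially amounts to reproving the theorem along the paper's lines rather than quoting Rabinovitch's bound.
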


\begin{proof}
    Let \(P\) be a unit OC interval order and fix a representation $\mathcal{I}$
  of $P$ in which every interval has unit length.  Let \(Q\) be the
  unit interval order formed from \(P\) by making all of the intervals in
  \(\mathcal{I}\) open. Thus, in \(Q\) there may be pairs of points
  \(x,y\) such that \(x\) and \(y\) are comparable in \(Q\) and
  incomparable in \(P\). However, all comparabilities in \(P\) remain
  comparabilities in \(Q\).  This means that any linear extension of $Q$ is also a linear extension of $P$.
  We now partition \(Q\) into antichains
  \(A_1,A_2,\dots,A_t\) by successively removing the minimal elements
  of \(Q\). Since incomparabilities in $Q$ are also incomparabilities in $P$, each $A_i$ is also an antichain in $P$.

  We claim that the
  only incomparabilities in \(P\) are within an \(A_i\) or between
  consecutive antichains.   First we show that for any \(a\in A_i\) and \(b\in A_{i+2}\), \(a< b\) in
  \(P\). To prove this, assume for a contradiction that \(a\) and
  \(b\) are incomparable in \(P\). Hence, \(\ileft{b}\leq
  \iright{a}\). Since \(b\in A_{i+2}\), there exists \(x\in A_{i+1}\) with
  \(x < b\) in \(Q\). Similarly, there exists \(y\in A_i\) with \(y <
  x\) in \(Q\). Since \(y < x < b\) in \(Q\) and all intervals are open,
  \[\iright{y}\leq \ileft{x} = \iright{x} - 1\leq \ileft{b} - 1\leq
    \iright{a}-1 = \ileft{a}.\]
  If any of the inequalities above is strict, then \(\iright{y} <
  \ileft{a}\) and \(y < a\) in \(Q\), which contradicts that \(a\) and
  \(y\) are both in the antichain \(A_i\). Thus, equality must hold
  throughout and \(\ileft{x} = \ileft{a}\), meaning that \(x\) and
  \(a\) have the same interval in our representation of
  \(Q\). However, this would mean that \(x\) and \(a\) should be in
  the same antichain of our partition instead of having \(x\in
  A_{i+1}\) and \(a\in A_i\). Now for \(j > i+2\), \(c\in A_j\)
  requires that \(c\) is greater than some element of \(A_{i+2}\), and
  thus \(c\) is greater than all elements of \(A_i\). Therefore, the
  only incomparabilities in \(P\) are within an \(A_i\) or between
  consecutive antichains.

   We now form a witness \(\mathcal{R}=\set{L_1,L_2,L_3}\) to
  demonstrate \(\dim(P)\leq 3\).  The total orders $L_1$ and $L_2$ will be linear extensions of $P$, while $L_3$ will be a linear extension of both $P$ and $Q$. Let \(A = \bigcup_i A_{2i}\) and \(B = \bigcup_i A_{2i+1}\). 
  As $P$ is an interval order by Lemma~\ref{lem:rep} and $A$ and $B$ are disjoint, we may apply Lemma~\ref{lem:rev-sets}.  In particular, we define $L_1$ as the linear extension of $P$ formed by applying Lemma~\ref{lem:rev-sets} to place $A > B$ and similarly define $L_2$ to place $B > A$. Thus, we have that if $x$ and $y$ are incomparable in $P$ with $x \in A$ and $y \in B$, then $x > y$ in $L_1$ and $y > x$ in $L_2$. 
  For \(L_3\),
  we first order the elements by the antichains to which they belong
  \(A_1, A_2, \dots, A_t\), and when ordering the elements of \(A_i\),
  we place them in the dual order to their order in \(L_1\). Now
  \(\mathcal{R}\) is a realizer of \(P\) because we know all
  incomparabilities are either within an antichain \(A_i\), in which
  case the incomparable pairs appear in opposite orders in \(L_1\) and
  \(L_3\), or between consecutive antichains, in which case the
  incomparable pairs appear in opposite orders in \(L_1\) and \(L_2\).
\end{proof}

Notice that the argument that proves Theorem~\ref{thm:unit-OC-dim}
works equally well if we allow posets \(P\) having representations
including half-open intervals \((a,a+1]\) and \([b,b+1)\), since the construction of \(Q\) still preserves all comparabilities of \(P\) and Lemma~\ref{lem:rep} ensures that $P$ is an interval order so that Lemma~\ref{lem:rev-sets} can be applied. 
When unit half-open intervals are allowed, the resulting class of posets is larger than the unit OC interval orders.  For example, the poset in Figure~\ref{fig:non_unitOC} is not a unit OC interval order.
There is a \oneplusthree{} on \(y\) and \(\set{x_1,x_2, x_3}\). This forces \(y\) and \(x_2\) to be represented by intervals with the same endpoints, \(y\) to be represented by a closed interval, \(x_2\) to be represented by an open interval, and \(x_3\) to be represented by an interval that is closed on the left. Similarly, there is a \oneplusthree{} on \(y\) and \(\set{x_1, x_2, z}\), so \(z\) must be represented by an interval having the same endpoints as the interval of \(x_3\). Furthermore, the interval of \(z\) must be closed on the left. We now try to add an interval representing \(x_4\). It must be greater than \(x_3\) but incomparable with \(z\). Since the intervals representing \(x_3\) and \(z\) have the same endpoints, the only way to make this happen is for \(x_3\) to be represented by an interval that is open on the right and \(z\) to be represented by an interval that is closed on the right. Thus \(x_3\) must be represented by a half-open interval. This leads to the unit mixed interval representation shown at the right in Figure~\ref{fig:non_unitOC}.
 Joos \cite{joosCharacterizationMixedUnit2015} and Shuchat et al.\ \cite{shuchatUnitMixedInterval2014} study this larger class from the perspective of graph theory and independently give a forbidden graph characterization of these \emph{unit mixed interval graphs}. 
 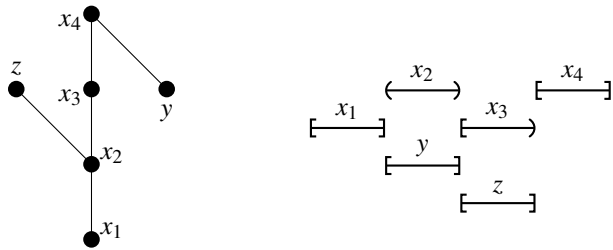
\begin{figure}[h]
  \centering
  \begin{tikzpicture}
    \draw (0,2) -- (1,1);
    \draw (1,0) -- (1,1) -- (1,2) -- (1,3) -- (2,2);
    \node[vtx, label={[right] {$x_1$}}] at (1,0) {};
    \node[vtx, label={[right] {$x_2$}}] at (1,1) {};
    \node[vtx, label={[below left] {$x_3$}}] at (1,2) {};
    \node[vtx, label={[below left] {$x_4$}}] at (1,3) {};
    \node[vtx, label={[above, label distance = -0.05cm] {$z$}}] at (0,2) {};
    \node[vtx, label={[below, label distance = -0.2cm] {$y$}}] at (2,2) {};
  \end{tikzpicture} \hspace*{0.1\linewidth}
  \begin{tikzpicture}
    \node [above] at (0,-0.6) {\phantom{\(x_1\)}};
    \draw[{[-]},thick] (0,1) -- (1,1);
    \node [above] at (0.5,1) {\(x_1\)};
    \draw[{[-)},thick] (2,1) -- (3,1);
    \node [above] at (2.5,1) {\(x_3\)};
    \draw[{(-)},thick] (1,1.5) -- (2,1.5);
    \node [above] at (1.5,1.5) {\(x_2\)};
    \draw[{[-]},thick] (3,1.5) -- (4,1.5);
    \node [above] at (3.5,1.5) {\(x_4\)};
    \draw[{[-]},thick] (1,.5) -- (2,.5);
    \node [above] at (1.5,.5) {\(y\)};
    \draw[{[-]},thick] (2,0) -- (3,0);
    \node [above] at (2.5,0) {\(z\)};
  \end{tikzpicture}
  \caption{A poset that is not a unit OC interval order and a unit mixed interval representation of it}
  \label{fig:non_unitOC}
\end{figure}
\begin{thm}\label{thm:01-dim}
  If $P$ is an interval order having a representation with each interval of length $0$ or $1$, then $\dim(P)\leq 3$.
\end{thm}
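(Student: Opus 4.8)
The plan is to isolate the two essentially different kinds of incomparable pairs and dispatch them using \lemref{lem:rev-sets} together with the unit interval order structure of the length-\(1\) intervals. A chain satisfies the bound trivially, so I may assume \(\dim(P)\ge 2\) and hence, as observed after the definition of dimension, that \(P\) has no duplicated holdings. Two length-\(0\) intervals are incomparable precisely when they share their single endpoint, and in that case they have equal up sets and equal down sets; so once duplicated holdings are removed the length-\(0\) points sit at distinct positions, form a chain, and contribute no incomparable pairs among themselves. The length-\(1\) intervals form a unit interval order \(U\). Consequently \(\inc(P)\) consists only of point–interval pairs and interval–interval pairs, and the task is to find three linear extensions of \(P\) that jointly reverse all pairs of both types.

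The point–interval pairs are easy in isolation. Let \(\mathcal{A}\) be the set of length-\(1\) intervals and \(\mathcal{B}\) the set of length-\(0\) points. Applying \lemref{lem:rev-sets} to \((\mathcal{A},\mathcal{B})\) gives a linear extension in which each point lies below every incomparable interval, and applying it to \((\mathcal{B},\mathcal{A})\) gives one with the opposite behavior; thus two extensions already reverse every point–interval pair. The interval–interval pairs all lie in the unit interval order \(U\), which has dimension at most \(3\) by Rabinovitch's theorem \cite{rabinovitch:semiorder-dim}, yielding a realizer \(\{M_1,M_2,M_3\}\) of \(U\) that reverses them. The obstruction is the budget: only three linear extensions are allowed in total, so the two point–interval extensions must be \emph{fused} with the interval realizer rather than added on top of it.

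I would therefore recast the goal as a cover \(\inc(P)=S_1\cup S_2\cup S_3\), taking \(S_1\) to be all point–interval pairs oriented point-above-interval together with the interval pairs reversed by \(M_1\), \(S_2\) to be the mirror set together with the pairs reversed by \(M_2\), and \(S_3\) to be the pairs reversed by \(M_3\). By the Trotter–Moore criterion \cite{Tro-Moo-77} it then suffices to prove that each \(S_i\) contains no strict alternating cycle, whereupon the three extensions reversing the \(S_i\) realize \(P\). For \(S_3\) this is immediate, since a strict alternating cycle there would lie entirely in \(U\). The heart of the matter—and the step I expect to be the main obstacle—is \(S_1\) and \(S_2\), where a strict alternating cycle might thread a single point–interval pair through several interval pairs. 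Here I would use the structural facts recorded before \lemref{lem:rev-sets}: in an interval order a strict alternating cycle contains at most one strict comparability and has some element occurring as both a first and a second coordinate of distinct pairs. Because every second coordinate occurring in \(S_1\) is an interval, that shared element must be an interval; a point–interval pair, if present, must supply the unique strict comparability; and the remaining interval comparabilities collapse to equalities. Chasing left endpoints through this collapsed configuration (using that a point strictly below one interval but incomparable to another pins down the relative order of their left endpoints) shows that the offending interval pair would have to be oriented opposite to the way \(M_1\) reverses it. Making this contradiction actually close for both \(S_1\) and \(S_2\) requires choosing the realizer of \(U\) so that its reversals are coherent with the left-endpoint order of the intervals, and verifying that such a coherent realizer of a unit interval order always exists is the technical crux I would need to settle.
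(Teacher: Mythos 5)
Your preliminary reductions are correct (after removing duplicated holdings the length-\(0\) points occupy distinct positions and form a chain, so \(\inc(P)\) consists only of point--interval and interval--interval pairs), and your overall frame --- cover \(\inc(P)\) by three sets, each free of strict alternating cycles, and invoke the Trotter--Moore criterion --- is the same frame the paper uses. But the step you defer as ``the technical crux I would need to settle'' is not a verification you can postpone: it is false as you have set things up. Assigning \emph{all} point-above-interval pairs to the first extension and \emph{all} interval-above-point pairs to the second is already inconsistent with a three-element realizer, independently of how the realizer \(\set{M_1,M_2,M_3}\) of \(U\) is chosen. Take unit intervals \(u=[0,1]\), \(v=[0.5,1.5]\), \(w=[1.1,2.1]\) and length-\(0\) points \(d_1=0.2\), \(d_2=1.05\), \(d_3=1.2\), \(d_4=1.7\); then \(u\parallel v\), \(v\parallel w\), \(u<w\), and \(d_1\parallel u\), \(d_2\parallel v\), \(d_3\parallel v\), \(u<d_3\), \(d_4\parallel w\), \(v<d_4\), \(d_2<w\), \(d_1<v\). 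Suppose \(L_1\) puts every point above every incomparable interval and \(L_2\) does the opposite. In \(L_1\) we have \(d_1>u\), and since \(d_1<v\) in \(P\) forces \(d_1<v\) in \(L_1\), we get \(u<v\) in \(L_1\); in \(L_2\) we have \(v>d_3\), and \(u<d_3\) in \(P\) forces \(u<d_3\) in \(L_2\), so again \(u<v\) in \(L_2\). Hence \(L_3\) must reverse \((u,v)\). The identical argument with \(d_2\) (in \(L_1\)) and \(d_4\) (in \(L_2\)) forces \(v<w\) in both \(L_1\) and \(L_2\), so \(L_3\) must also reverse \((v,w)\). But then \(u>v>w\) in \(L_3\), contradicting \(u<w\) in \(P\). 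So no ``coherent realizer'' exists for this seven-point poset; the pairs \((u,v)\) and \((v,w)\) are exactly the kind of doubly blocked pairs your left-endpoint heuristic cannot rescue, because together they form a strict alternating cycle that no single third extension can reverse.

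The failure is located precisely in the lumping of the point--interval pairs by orientation, and the paper's proof differs exactly there: the point--interval pairs must themselves be distributed between the first two extensions according to \emph{where the points lie}. The paper partitions the unit intervals \(Q\) into antichains \(A_1,\dots,A_t\) by repeatedly removing minimal elements, sets \(p_i\) to be the least right endpoint in \(A_i\), and partitions the length-\(0\) points into blocks \(D_0,\dots,D_t\) by the thresholds \(p_i\). Then \(S_1\) contains the pairs \((x,y)\) with \(x\in A_{2i-1},y\in A_{2i}\), or \(x\in D_{2i-1},y\in A_{2i}\), or \(x\in A_{2i-1},y\in D_{2i-1}\), and \(S_2\) is the even-indexed analogue: a given block of points goes \emph{above} the following antichain but \emph{below} its own antichain within the same extension, and the parity split localizes the interactions so that the blocking cycles you correctly identified cannot occur. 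Within-antichain pairs (such as \((u,v)\) above, with \(u,v\) both in \(A_1\)) are reversed by a third extension listing \(D_0<A_1<D_1<\cdots<A_t<D_t\) with each \(A_i\) in the order dual to \(L_1\). In short, your cycle analysis finds the right obstructions, but your fixed orientation split makes them unavoidable; the antichain-and-threshold bookkeeping is not a technicality sitting on top of your argument --- it is the actual content of the proof.
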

\begin{proof}
  Without loss of generality, we may assume that $P$ has no duplicated
  holdings, as duplicated holdings do not change dimension when posets have dimension 2 or more.
  Thus every
  representation of \(P\) has all distinct intervals. Let 
  $\mathcal{I} = \set{[\ileft{x},\iright{x}]\colon x\in P}$ be a representation of $P$ in
  which each interval has length $0$ or $1$. Let $Q$ be the subposet
  of $P$ consisting of the points represented by unit intervals in
  $\mathcal{I}$. Partition $Q$ into antichains $A_1,A_2,\dots,A_t$ by
  successively removing the minimal elements. For $i$ with $1 \le i \le t$, let
  $p_i = \min\set{\iright{x}\colon x\in A_i}$. That is, $p_i$ is the
  smallest right endpoint of an interval in $A_i$. We now partition
  the length $0$ intervals into $t+1$ sets based on their location
  relative to the $p_i$. For $i$ with $1 \le i \le t-1$, let $D_i$ be the length $0$
  intervals $[c,c]$ with $c\in (p_i,p_{i+1}]$. Let $D_0$ be the length
  $0$ intervals $[c,c]$ with $c\leq p_1$, and let $D_t$ be the length $0$
  intervals $[c,c]$ with $c>p_t$.

  Note that a length $1$ interval belongs to the antichain $A_i$ if and
  only if it contains $p_i$. Since each interval in $A_{i+1}$ is
  greater than an interval in \(A_i\), all intervals in \(A_{i+1}\)
  lie to the right of $p_i$. Therefore, $A_{i+2} > D_i$ for
  $i=0,\dots,t-2$. 
  Furthermore, $D_i > A_{i-1}$ for $i=2,\dots,t$
  since all intervals in $A_{i-1}$ must end before $p_{i}$. If \(x\in A_i\) and \(y\in A_{i+2}\), then there is  \(z\in A_{i+1}\) satisfying \(z < y\). Hence,
  \[\ileft{y} > \iright{z}\geq p_{i+1} > \iright{x}.\]
  This guarantees that $A_j > A_i$ if $j \geq
  i+2$.

  Hence, the incomparable pairs we must reverse involve pairs of
  points
  \begin{itemize}
  \item in $A_i$ and $A_{i+1}$,
  \item in $D_i$ and $A_i$,
  \item in $D_i$ and $A_{i+1}$, and
  \item both in $A_i$.
  \end{itemize}
  We will now define two disjoint sets of incomparable pairs and show that neither contains a strict alternating cycle. Let
  \begin{align*}
    S_1 &= \set{(x,y)\in \inc(P) \colon x\in A_{2i-1}, y\in A_{2i}\text{ or }x\in D_{2i-1}, y\in A_{2i}\text{ or }x\in A_{2i-1}, y\in D_{2i-1}}\\
    S_2 &= \set{(x,y)\in \inc(P) \colon x\in A_{2i}, y\in A_{2i+1}\text{ or }x\in D_{2i}, y\in A_{2i+1}\text{ or }x\in A_{2i}, y\in D_{2i}}.
  \end{align*}
  Suppose that $S_j$ contains a strict alternating cycle
$C$. By our earlier argument about strict alternating cycles in interval orders,
$C=\set{(d,y),(x,d)}$ is the entire strict alternating cycle
with $d$ represented by an interval of
length $0$. (If $d$ had length $1$, then $d$ would be in both $A_i$ and $A_{i+1}$ for some $i$, which is impossible.) By our definition of $S_1$ and $S_2$, this forces $x \in A_{k}$ and $y
\in A_{k+1}$ for some $k$. However, the definition of a strict alternating cycle implies that $d$ is incomparable with both $x$ and $y$ and that $x<y$. This is a contradiction since a length 0 interval cannot intersect two disjoint intervals.

  Since $S_j$ does not contain a strict alternating cycle, there is a linear extension $L_j$  of $P$ that reverses all the incomparable pairs of $S_j$ for \(j=1,2\). Form a third linear extension of $P$ with $D_0 < A_1 < D_1 < A_2 < \cdots < A_t < D_t$. Each $D_i$ is a chain, so the ordering of those points is fixed. For each $A_i$, order the points in the dual order to how they appear in $L_1$. 
  
  We now verify that $\set{L_1,L_2,L_3}$ realizes $P$ and therefore $\dim(P)\leq 3$. In particular, a pair $(a_i,a_{i+1}) \in \inc(P)\cap \paren{A_i \times A_{i+1}}$ is reversed in $L_1$ if $i$ is odd and in $L_2$ if $i$ is even. The pair $(a_{i+1},a_i)$ is reversed in $L_3$.  Similarly, the pairs $(a_i,d_i) \in \inc(P) \cap \paren{A_i \times D_i}$ and $(d_i,a_{i+1}) \in \inc(P)\cap\paren{D_i \times A_{i+1}}$ are reversed in $L_1$ if $i$ is odd and in $L_2$ if $i$ is even, while $(d_i,a_i)$ and $(a_{i+1},d_i)$ are reversed in $L_3$.  Finally all pairs $(a_i,a_i') \in \inc(P)\cap A_i^2$ are either reversed in $L_1$ or in $L_3$, where the elements of $A_i$ appear in the dual order as in $L_1$.
\end{proof}

Notice that the proof of Theorem~\ref{thm:01-dim} works equally well
if all intervals have length \(0\) and \(r\) for some fixed positive
real number \(r\), since the subposet consisting of the length \(r\) intervals is a unit interval order. While it seems possible to relax the 
requirement of the unit intervals being closed in
Theorem~\ref{thm:01-dim}, the modifications to the proof appear to be more
intricate than those required for Theorem~\ref{thm:unit-OC-dim}.

\section{Conclusion}
\label{sec:conclusion}
We conclude briefly with some related open questions inspired by this work. 
\begin{enumerate}
    \item Here we address the case of interval orders that can be represented using
  intervals of only length \(0\) and some positive length. What is the 
  bound on the dimension of interval orders having a representation
  consisting only of intervals of lengths \(r\) and \(s\) with \(r,s > 0\)?

\item More generally, what is the growth rate of the function \(f\) such that if \(P\) is an interval
  order having a representation using at most \(r\) different interval
  lengths, then \(\dim(P)\leq f(r)\)? 
  \end{enumerate}
  It is easy to see that the answer to the first question is at most 8 by partitioning the poset into two semiorders, $R$ and $S$, consisting of the length $r$ and length $s$ intervals.  The incomparabilities in $R$ and $S$ can each be reversed using 3 linear extensions, and two additional linear extensions suffice to reverse incomparabilities between $R$ and $S$.  This idea can be generalized \cite{trotterPersonalCommunication2020} to show that $f(r) \leq 3r + \binom{r}{2}$.  However, based on the work of F\"uredi et
  al.\ in \cite{furedi:intord-shift} showing  that the dimension of the canonical interval order with \(n\) different interval lengths is asymptotically \[\lg(\lg(n)) +
  \left(\frac{1}{2} + o(1)\right)\lg(\lg(\lg(n))),\] we expect that this bound on $f(r)$ is extremely loose and that $f(r)$ in fact grows extremely slowly, i.e., $O(\lg\lg(r))$.  It is worth noting that using improved estimates for the dimension  of the shift graph~\cite{hostenOrderDimensionComplete1999, kleitmanDedekindProblemNumber1975} the error term on the estimate of the dimension of the canonical interval order is actually at most 5~\cite{trotterPersonalCommunication2020}. 
  
  In \cite{bosekOnlineVersionRabinovitch2012}, Bosek et al.\ use marking functions to provide an alternative proof of Rabinovitch's theorem that the dimension of a unit interval order is at most \(3\). These methods may provide alternative (but likely no shorter) proofs of the theorems in this paper, but care would have to be taken because the marking function arguments require representations with distinct endpoints, which cannot be assured when allowing both open and and closed unit intervals. (For instance, \oneplusthree{} can only be represented as such an order by allowing repetition of endpoints.) 
  
  It would also be interesting to find the smallest \(n\) such that there exists an interval
  order on \(n\) points having dimension \(4\) and to determine the minimum number of interval lengths required to force the dimension to \(4\). To the best of the
  authors' knowledge, all arguments showing the existence of a
  4-dimensional interval order require a large poset with a relatively large number of interval lengths. An answer to this problem may be related to the first question above, were it possible to show that the best possible bound is \(4\) by giving a straightforward construction of a four-dimensional interval order that can be represented using intervals of two lengths.

\section*{Acknowledgment}

The authors would like to thank an anonymous referee who pointed out an error in the application of Lemma~\ref{lem:rev-sets} in the proof of Theorem~\ref{thm:unit-OC-dim} in an earlier version of this paper.

\bibliographystyle{acm}
\bibliography{zotero-abbrev}

\end{document}